%2multibyte Version: 5.50.0.2960 CodePage: 1254

\documentclass[reqno]{amsart}
%%%%%%%%%%%%%%%%%%%%%%%%%%%%%%%%%%%%%%%%%%%%%%%%%%%%%%%%%%%%%%%%%%%%%%%%%%%%%%%%%%%%%%%%%%%%%%%%%%%%%%%%%%%%%%%%%%%%%%%%%%%%%%%%%%%%%%%%%%%%%%%%%%%%%%%%%%%%%%%%%%%%%%%%%%%%%%%%%%%%%%%%%%%%%%%%%%%%%%%%%%%%%%%%%%%%%%%%%%%%%%%%%%%%%%%%%%%%%%%%%%%%%%%%%%%%
\usepackage{amsmath}
\usepackage{amsfonts}

\setcounter{MaxMatrixCols}{10}
%TCIDATA{OutputFilter=LATEX.DLL}
%TCIDATA{Version=5.50.0.2960}
%TCIDATA{Codepage=1254}
%TCIDATA{<META NAME="SaveForMode" CONTENT="1">}
%TCIDATA{BibliographyScheme=Manual}
%TCIDATA{Created=Saturday, November 17, 2018 23:05:32}
%TCIDATA{LastRevised=Thursday, March 14, 2019 16:08:24}
%TCIDATA{<META NAME="GraphicsSave" CONTENT="32">}
%TCIDATA{<META NAME="DocumentShell" CONTENT="Articles\SW\AMS Journal Article">}
%TCIDATA{Language=American English}
%TCIDATA{CSTFile=amsartci.cst}

\newtheorem{theorem}{Theorem}
\theoremstyle{plain}

\newtheorem{definition}{Definition}
\newtheorem{example}{Example}

\newtheorem{proposition}{Proposition}

\numberwithin{equation}{section}

\begin{document}
\title[$C$-parallel and $C$-proper Curves]{$C$-parallel and $C$-proper Slant
Curves of $S$-manifolds}
\author{\c{S}aban G\"{u}ven\c{c}}
\address[\c{S}. G\"{u}ven\c{c} and C. \"{O}zg\"{u}r]{Balikesir University,
Department of Mathematics\\
Campus of Cagis, Balikesir, TURKEY}
\email[\c{S}. G\"{u}ven\c{c}]{sguvenc@balikesir.edu.tr}
\author{Cihan \"{O}ZG\"{U}R}
\email[C.~\"{O}zg\"{u}r]{cozgur@balikesir.edu.tr}
\subjclass[2010]{53C25, 53C40, 53A05}
\keywords{$C$-parallel curve, $C$-proper curve, slant curve, $S$-manifold}

\begin{abstract}
In the present paper, we define and study $C$-parallel and $C$-proper slant
curves of $S$-manifolds. We prove that a curve $\gamma $ in an $S$-manifold
of order $r\geq 3,$ under certain conditions, is $C$-parallel or $C$%
-parallel in the normal bundle if and only if it is a non-Legendre slant
helix or Legendre helix, respectively. Moreover, under certain conditions,
we show that $\gamma $ is $C$-proper or $C$-proper in the normal bundle if
and only if it is a non-Legendre slant curve or Legendre curve,
respectively. We also give two examples of such curves in $\mathbb{R}%
^{2m+s}(-3s).$
\end{abstract}

\maketitle

\section{Introduction\label{sect-introduction}}

Let $M^{m}$ be an integral submanifold of a Sasakian manifold $%
(N^{2n+1},\varphi ,\xi ,\eta ,g)$. Then $M$ is called \textit{integral }$C$%
\textit{-parallel} if $\nabla ^{\perp }B$ is parallel to the characteristic
vector field $\xi $, where $B$ is the second fundamental form of $M$ and $%
\nabla ^{\perp }B$ is given by%
\begin{equation*}
(\nabla ^{\perp }B)(X,Y,Z)=\nabla _{X}^{\perp }B(Y,Z)-B(\nabla
_{X}Y,Z)-B(Y,\nabla _{X}Z)
\end{equation*}%
where $X,Y,Z$ are vector fields on $M$, $\nabla ^{\perp }$ and $\nabla $ are
the normal connection and the Levi-Civita connection on $M$, respectively
\cite{FO-2012}. Now, let $\gamma $ be a curve in an almost contact metric
manifold $(M,\varphi ,\xi ,\eta ,g)$. Lee, Suh and Lee introduced the
notions of $C$-parallel and $C$-proper curves along slant curves of Sasakian
$3$-manifolds in the tangent and normal bundles \cite{LSL}. A curve $\gamma $
in an almost contact metric manifold $(M,\varphi ,\xi ,\eta ,g)$ is said to
be $C$\textit{-parallel }if $\nabla _{T}H=\lambda \xi $, $C$\textit{-proper }%
if $\Delta H=\lambda \xi $, $C$\textit{-parallel in the normal bundle }if $%
\nabla _{T}^{\perp }H=\lambda \xi $, $C$\textit{-proper in the normal bundle}
if $\Delta ^{\perp }H=\lambda \xi $, where $T$ is the unit tangent vector
field of $\gamma $, $H$ is the mean curvature vector field, $\Delta $ is the
Laplacian, $\lambda $ is a non-zero differentiable function along the curve $%
\gamma ,$ $\nabla ^{\perp }$ and $\Delta ^{\perp }$ denote the normal
connection and Laplacian in the normal bundle, respectively \cite{LSL}. For
a submanifold $M$ of an arbitrary Riemannian manifold $\widetilde{M}$, if $%
\Delta H=\lambda H$, then $M$ is called \textit{submanifold with a proper
mean curvature vector field }$H$ \cite{Chen-89}. If $\Delta ^{\perp
}H=\lambda H$, then $M$ is said to be \textit{submanifold with a proper mean
curvature vector field} $H$ \textit{in the normal bundle} \cite{Arroyo}.

Let $\gamma (s)$ be a Frenet curve parametrized by the arc-length parameter $%
s$ in an almost contact metric manifold $M$. The function $\theta (s)$
defined by $cos[\theta (s)]=g(T(s),\xi )$ is called \textit{the contact
angle function}. A curve $\gamma $ is called a \textit{slant curve} if its
contact angle is a constant \cite{CIL}. If a slant curve is with contact
angle $\frac{\pi }{2}$, then it is called a \textit{Legendre curve }\cite%
{Blair}.

Lee, Suh and Lee studied $C$-parallel and $C$-proper slant curves of
Sasakian $3$-manifolds in \cite{LSL}. As a generalization of this paper, in
\cite{GO-revuma}, the present authors studied $C$-parallel and $C$-proper
curves in trans-Sasakian manifolds. In the present paper, our aim is to
consider $C$-parallel and $C$-proper curves of $S$-manifolds.

The paper is organized as follows: In Section \ref{sect-preliminaries}, we
give a brief introduction about $S$-manifolds. Futhermore, we define the
notions of $C$-parallel and $C$-proper curves in $S$-manifolds both in
tangent and normal bundles. In Section \ref{sect-cparallel}, we consider $C$%
-parallel slant curves in $S$-manifolds in tangent and normal bundles,
respectively. In Section \ref{sect-cproper}, we study $C$-proper slant
curves in $S$-manifolds in tangent and normal bundles, respectively. In the
final section, we present two examples of these kinds of curves in $\mathbb{R%
}^{2m+s}(-3s)$.

\section{\textbf{Preliminaries}\label{sect-preliminaries}}

Let $(M,g)$ be a $(2m+s)$-dimensional Riemann manifold. $M$ is called
\textit{framed metric manifold} \cite{YK-1984} with a \textit{framed metric
structure} $(\varphi ,\xi _{\alpha },\eta ^{\alpha },g)$, $\alpha \in
\left\{ 1,...,s\right\} ,$ if this structure satisfies the following
equations:
\begin{equation}
\begin{array}{cccc}
\varphi ^{2}=-I+\overset{s}{\underset{\alpha =1}{\sum }}\eta ^{\alpha
}\otimes \xi _{\alpha }, & \eta ^{\alpha }(\xi _{\beta })=\delta _{\beta
}^{\alpha }, & \varphi \left( \xi _{\alpha }\right) =0, & \eta ^{\alpha
}\circ \varphi =0%
\end{array}%
\end{equation}%
\begin{equation}
g(\varphi X,\varphi Y)=g(X,Y)-\overset{s}{\underset{\alpha =1}{\sum }}\eta
^{\alpha }(X)\eta ^{\alpha }(Y),
\end{equation}%
\begin{equation}
\begin{array}{cc}
d\eta ^{\alpha }(X,Y)=g(X,\varphi Y)=-d\eta ^{\alpha }(Y,X), & \eta ^{\alpha
}(X)=g(X,\xi ),%
\end{array}%
\end{equation}%
where, $\varphi $ is a ($1,1$) tensor field\textit{\ }of rank $2m$; $\xi
_{1},...,\xi _{s}$ are vector fields; $\eta ^{1},...,\eta ^{s}$ are $1$%
-forms and $g$ is a Riemannian metric on $M$; $X,Y\in TM$ and $\alpha ,\beta
\in \left\{ 1,...,s\right\} $. $(M^{2m+s},\varphi ,\xi _{\alpha },\eta
^{\alpha },g)$ is also called \textit{framed }$\varphi $\textit{-manifold }%
\cite{Nak-1966} or \textit{almost }$r$\textit{-contact metric manifold }\cite%
{Vanzura-1972}. $(\varphi ,\xi _{\alpha },\eta ^{\alpha },g)$ is said to be
an $S$\textit{-structure},\textit{\ }if the Nijenhuis tensor of $\varphi $
is equal to $-2d\eta ^{\alpha }\otimes \xi _{\alpha }$, where $\alpha \in
\left\{ 1,...,s\right\} $ \cite{Blair-1970, Cabrerizo}.

When $s=1$, a framed metric structure turns into an almost contact metric
structure and an $S$-structure turns into a Sasakian structure. For an $S$%
-structure, the following equations are satisfied \cite{Blair-1970, Cabrerizo}:
\begin{equation}
(\nabla _{X}\varphi )Y=\underset{\alpha =1}{\overset{s}{\sum }}\left\{
g(\varphi X,\varphi Y)\xi _{\alpha }+\eta ^{\alpha }(Y)\varphi ^{2}X\right\}
,  \label{nablaf}
\end{equation}%
\begin{equation}
\nabla _{X}\xi _{\alpha }=-\varphi X,\text{ }\alpha \in \left\{
1,...,s\right\} .  \label{nablaxi}
\end{equation}%
If $M$ is Sasakian ($s=1$), (\ref{nablaxi}) can be directly\ calculated from
(\ref{nablaf}).

Firstly, we give the following definition:

\begin{definition}
Let $\gamma :I\rightarrow (M^{2m+s},\varphi ,\xi _{\alpha },\eta ^{\alpha
},g)$ be a unit speed curve in an $S$-manifold. Then $\gamma $ is called

i) $C$\textit{-parallel (in the tangent bundle) }if
\begin{equation*}
\nabla _{T}H=\lambda \overset{s}{\underset{\alpha =1}{\sum }}\xi _{\alpha },
\end{equation*}

ii) $C$\textit{-parallel in the normal bundle }if
\begin{equation*}
\nabla _{T}^{\perp }H=\lambda \overset{s}{\underset{\alpha =1}{\sum }}\xi
_{\alpha },
\end{equation*}

iii) $C$\textit{-proper (in the tangent bundle) }if
\begin{equation*}
\Delta H=\lambda \overset{s}{\underset{\alpha =1}{\sum }}\xi _{\alpha },
\end{equation*}

iv) $C$\textit{-proper in the normal bundle} if
\begin{equation*}
\Delta ^{\perp }H=\lambda \overset{s}{\underset{\alpha =1}{\sum }}\xi
_{\alpha },
\end{equation*}%
where $H$ is the mean curvature field of $\gamma $, $\lambda $ is a
real-valued non-zero differentiable function and $\Delta $ is the Laplacian.
\end{definition}

Let $\gamma :I\rightarrow M$ \ be a curve parametrized by arc length in an $%
n $-dimensional Riemannian manifold $(M,g)$. Denote by the Frenet frame and
curvatures of $\gamma $ by $\left\{ E_{1},E_{2},...,E_{r}\right\} $ and $%
\kappa _{1},...,\kappa _{r-1},$ respectively. We know that (see \cite{Arroyo}%
)%
\begin{equation*}
\nabla _{T}H=-\kappa _{1}^{2}E_{1}+\kappa _{1}^{\prime }E_{2}+\kappa
_{1}\kappa _{2}E_{3},
\end{equation*}%
\begin{equation*}
\nabla _{T}^{\perp }H=\kappa _{1}^{\prime }E_{2}+\kappa _{1}\kappa _{2}E_{3},
\end{equation*}%
\begin{eqnarray*}
\Delta H &=&-\nabla _{T}\nabla _{T}\nabla _{T}T \\
&=&3\kappa _{1}\kappa _{1}^{\prime }E_{1}+\left( \kappa _{1}^{3}+\kappa
_{1}\kappa _{2}^{2}-\kappa _{1}^{\prime \prime }\right) E_{2} \\
&&-(2\kappa _{1}^{\prime }\kappa _{2}+\kappa _{1}\kappa _{2}^{\prime
})E_{3}-\kappa _{1}\kappa _{2}\kappa _{3}E_{4}
\end{eqnarray*}%
and%
\begin{eqnarray*}
\Delta ^{\perp }H &=&-\nabla _{T}^{\perp }\nabla _{T}^{\perp }\nabla
_{T}^{\perp }T \\
&=&\left( \kappa _{1}\kappa _{2}^{2}-\kappa _{1}^{\prime \prime }\right)
E_{2}-\left( 2\kappa _{1}^{\prime }\kappa _{2}+\kappa _{1}\kappa
_{2}^{\prime }\right) E_{3} \\
&&-\kappa _{1}\kappa _{2}\kappa _{3}E_{4}.
\end{eqnarray*}%
So we can directly state the following Proposition:

\begin{proposition}
\label{lemma1}Let $\gamma :I\rightarrow (M^{2m+s},\varphi ,\xi _{\alpha
},\eta ^{\alpha },g)$ be a unit speed curve in an $S$-manifold. Then

i) $\gamma $ is $C$-parallel (in the tangent bundle) if and only if
\begin{equation}
-\kappa _{1}^{2}E_{1}+\kappa _{1}^{\prime }E_{2}+\kappa _{1}\kappa
_{2}E_{3}=\lambda \overset{s}{\underset{\alpha =1}{\sum }}\xi _{\alpha },
\label{cparalleltangent}
\end{equation}

ii) $\gamma $ is $C$\textit{-parallel in the normal bundle }if and only if%
\begin{equation}
\kappa _{1}^{\prime }E_{2}+\kappa _{1}\kappa _{2}E_{3}=\lambda \overset{s}{%
\underset{\alpha =1}{\sum }}\xi _{\alpha },  \label{cparallelnormal}
\end{equation}

iii) $\gamma $ is $C$\textit{-proper (in the tangent bundle) }if and only if
\begin{equation}
3\kappa _{1}\kappa _{1}^{\prime }E_{1}+\left( \kappa _{1}^{3}+\kappa
_{1}\kappa _{2}^{2}-\kappa _{1}^{\prime \prime }\right) E_{2}-(2\kappa
_{1}^{\prime }\kappa _{2}+\kappa _{1}\kappa _{2}^{\prime })E_{3}-\kappa
_{1}\kappa _{2}\kappa _{3}E_{4}=\lambda \overset{s}{\underset{\alpha =1}{%
\sum }}\xi _{\alpha },  \label{cpropertangent}
\end{equation}

iv) $\gamma $ is $C$\textit{-proper in the normal bundle} if and only if
\begin{equation}
\left( \kappa _{1}\kappa _{2}^{2}-\kappa _{1}^{\prime \prime }\right)
E_{2}-\left( 2\kappa _{1}^{\prime }\kappa _{2}+\kappa _{1}\kappa
_{2}^{\prime }\right) E_{3}-\kappa _{1}\kappa _{2}\kappa _{3}E_{4}=\lambda
\overset{s}{\underset{\alpha =1}{\sum }}\xi _{\alpha }.
\label{cpropernormal}
\end{equation}
\end{proposition}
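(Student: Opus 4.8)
The plan is to observe that Proposition~\ref{lemma1} is essentially a direct restatement of the Definition, so the ``proof'' amounts to substituting the four standard formulas for $\nabla_T H$, $\nabla_T^\perp H$, $\Delta H$, and $\Delta^\perp H$ (recorded in the paragraph immediately preceding the Proposition) into the four defining conditions. For each of the four items, I would write down the relevant expression in terms of the Frenet frame $\{E_1,\dots,E_r\}$ and curvatures $\kappa_1,\dots,\kappa_{r-1}$, set it equal to $\lambda\sum_{\alpha=1}^{s}\xi_\alpha$ as demanded by the definition, and read off the stated equation. There is genuinely no hidden content beyond this identification.

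First I would handle the tangent-bundle $C$-parallel case: by definition $\gamma$ is $C$-parallel iff $\nabla_T H=\lambda\sum_{\alpha}\xi_\alpha$, and since we have the formula $\nabla_T H=-\kappa_1^2 E_1+\kappa_1' E_2+\kappa_1\kappa_2 E_3$, the equivalence with \eqref{cparalleltangent} is immediate. Next, the normal-bundle $C$-parallel case uses $\nabla_T^\perp H=\kappa_1' E_2+\kappa_1\kappa_2 E_3$, yielding \eqref{cparallelnormal}. For the $C$-proper cases I would substitute the two third-order expressions $\Delta H=-\nabla_T\nabla_T\nabla_T T$ and $\Delta^\perp H=-\nabla_T^\perp\nabla_T^\perp\nabla_T^\perp T$, producing \eqref{cpropertangent} and \eqref{cpropernormal} respectively. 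Each of the four steps is a one-line substitution, so I would present them compactly rather than belaboring the algebra.

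Since the curvature formulas are quoted from the literature (\cite{Arroyo}) and assumed known, there is no real obstacle here; the only point requiring any care is to make sure the formulas are stated for a curve in an arbitrary $n$-dimensional Riemannian manifold and that the ambient $S$-manifold structure plays no role in deriving these four expressions (it enters only through the right-hand side $\lambda\sum_\alpha\xi_\alpha$, which comes from the Definition and not from any computation). The substance of the paper lies in the later sections, where these conditions are analyzed by decomposing $\sum_\alpha\xi_\alpha$ along the Frenet frame using the slant-curve hypothesis; the present Proposition is purely a convenient reformulation, so I would keep the proof to a few lines or even subsume it into the remark that it ``follows directly'' from the stated formulas, exactly as the phrase ``we can directly state the following Proposition'' already signals.
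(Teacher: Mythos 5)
Your proposal is correct and matches the paper exactly: the paper gives no separate proof, stating the Proposition as an immediate consequence of substituting the four Frenet-frame formulas for $\nabla_T H$, $\nabla_T^\perp H$, $\Delta H$, and $\Delta^\perp H$ (quoted from the literature) into the four defining conditions. Nothing further is needed.
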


Now, our aim is to apply Proposition \ref{lemma1} to slant curves in $S$%
-manifolds.

Let $\gamma :I\rightarrow (M^{2m+s},\varphi ,\xi _{\alpha },\eta ^{\alpha
},g)$ be a slant curve. Then, if we differentiate%
\begin{equation*}
\eta ^{\alpha }(T)=\cos \theta ,
\end{equation*}%
we get%
\begin{equation*}
\eta ^{\alpha }(E_{2})=0,
\end{equation*}
where, $\theta $ denotes the constant contact angle satisfying%
\begin{equation*}
\frac{-1}{\sqrt{s}}\leq \cos \theta \leq \frac{1}{\sqrt{s}}.
\end{equation*}%
The equality case is only valid for geodesics corresponding to the integral
curves of
\begin{equation*}
T=\frac{\pm 1}{\sqrt{s}}\overset{s}{\underset{\alpha =1}{\sum }}\xi _{\alpha
},
\end{equation*}%
(see \cite{GO2018}).

\section{$C$-parallel Slant Curves of $S$-manifolds\label{sect-cparallel}}

\bigskip Our first Theorem below is a result of Proposition \ref{lemma1} i).

\begin{theorem}
Let $\gamma :I\rightarrow M^{2m+s}$ be a unit-speed slant curve. Then $%
\gamma $ is $C$-parallel (in the tangent bundle) if and only if it is a
non-Legendre slant helix of order $r\geq 3$ satisfying%
\begin{equation*}
\overset{s}{\underset{\alpha =1}{\sum }}\xi _{\alpha }\in sp\left\{
T,E_{3}\right\} ,
\end{equation*}%
\begin{equation*}
\varphi T\in sp\left\{ E_{2},E_{4}\right\} ,
\end{equation*}%
\begin{equation*}
\kappa _{2}=\frac{-\kappa _{1}\sqrt{1-s\cos ^{2}\theta }}{\sqrt{s}\cos
\theta },\text{ }\kappa _{2}\neq 0,
\end{equation*}%
\begin{equation*}
\lambda =\frac{-\kappa _{1}^{2}}{s\cos \theta }=\text{constant,}
\end{equation*}%
and moreover if $\kappa _{3}=0$, then%
\begin{equation}
\kappa _{1}=-s\cos \theta \sqrt{1-s\cos ^{2}\theta },  \label{eq3}
\end{equation}%
\begin{equation}
\kappa _{2}=\sqrt{s}\left( 1-s\cos ^{2}\theta \right) .  \label{eq4}
\end{equation}
\end{theorem}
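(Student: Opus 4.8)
The plan is to start from the characterization in Proposition~\ref{lemma1}(i) and decode it by pairing both sides against the Frenet frame. Writing $\zeta=\sum_{\alpha=1}^{s}\xi_{\alpha}$, the slant condition $\eta^{\alpha}(T)=\cos\theta$ together with $\eta^{\alpha}(E_{2})=0$ immediately yields the scalar data I will need: $g(\zeta,T)=s\cos\theta$, $g(\zeta,E_{2})=0$, and $\lvert\zeta\rvert^{2}=\sum_{\alpha,\beta}\eta^{\alpha}(\xi_{\beta})=s$. Taking $g(\cdot,E_{1})$ of \eqref{cparalleltangent} gives $-\kappa_{1}^{2}=\lambda s\cos\theta$; since $\lambda\neq0$ this forces $\cos\theta\neq0$ (the curve is non-Legendre) and $\lambda=-\kappa_{1}^{2}/(s\cos\theta)$. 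Pairing with $E_{2}$ gives $\kappa_{1}'=\lambda g(\zeta,E_{2})=0$, so $\kappa_{1}$ is constant; pairing with each $E_{j}$ for $j\geq4$ gives $g(\zeta,E_{j})=0$. Hence $\zeta$ has no $E_{2}$ and no $E_{\geq4}$ components, i.e. $\zeta\in sp\{T,E_{3}\}$, the first asserted condition.

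Next I would determine the third-frame data. From $\zeta=(s\cos\theta)T+g(\zeta,E_{3})E_{3}$ and $\lvert\zeta\rvert^{2}=s$ I get $g(\zeta,E_{3})^{2}=s(1-s\cos^{2}\theta)$, and choosing the orientation $g(\zeta,E_{3})=\sqrt{s}\sqrt{1-s\cos^{2}\theta}$, the $E_{3}$-component of \eqref{cparalleltangent}, namely $\kappa_{1}\kappa_{2}=\lambda\,g(\zeta,E_{3})$, produces exactly $\kappa_{2}=-\kappa_{1}\sqrt{1-s\cos^{2}\theta}/(\sqrt{s}\cos\theta)$. Because $\kappa_{1}$ and $\theta$ are constant, so is $\kappa_{2}$, and the stated constancy of $\lambda$ follows as well; with $\kappa_{2}\neq0$ this makes $\gamma$ a slant helix of order $r\geq3$.

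To reach the condition $\varphi T\in sp\{E_{2},E_{4}\}$ I would differentiate $\zeta$ two ways. On one hand \eqref{nablaxi} gives $\nabla_{T}\zeta=-s\varphi T$; on the other, differentiating $\zeta=(s\cos\theta)T+c\,E_{3}$ (with $c=g(\zeta,E_{3})$ constant) and applying the Frenet equations $\nabla_{T}T=\kappa_{1}E_{2}$, $\nabla_{T}E_{3}=-\kappa_{2}E_{2}+\kappa_{3}E_{4}$ gives $\nabla_{T}\zeta=(s\cos\theta\,\kappa_{1}-c\kappa_{2})E_{2}+c\kappa_{3}E_{4}$. Equating the two expressions shows $\varphi T$ lies in $sp\{E_{2},E_{4}\}$, and a short simplification using the $\kappa_{2}$-formula reduces the $E_{2}$-coefficient to $\kappa_{1}/\cos\theta$. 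In the special case $\kappa_{3}=0$ the $E_{4}$-term drops out, so $\varphi T=-\kappa_{1}E_{2}/(s\cos\theta)$; comparing lengths via $\lvert\varphi T\rvert^{2}=g(T,T)-\sum_{\alpha}\eta^{\alpha}(T)^{2}=1-s\cos^{2}\theta$ then pins down $\kappa_{1}$ and, through the $\kappa_{2}$-relation, $\kappa_{2}$, giving \eqref{eq3} and \eqref{eq4}.

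For the converse I would run these steps backwards: assuming $\gamma$ is a non-Legendre slant helix with $\zeta\in sp\{T,E_{3}\}$, the prescribed $\kappa_{2}$, and $\lambda$ as given, I substitute $\zeta=(s\cos\theta)T+\sqrt{s}\sqrt{1-s\cos^{2}\theta}\,E_{3}$ into $\lambda\zeta$ and check that it reproduces $-\kappa_{1}^{2}E_{1}+\kappa_{1}'E_{2}+\kappa_{1}\kappa_{2}E_{3}$ term by term, using $\kappa_{1}'=0$ and the $\kappa_{2}$-formula. The main points requiring care are the sign bookkeeping in the choice of $g(\zeta,E_{3})$, which must be kept consistent across the $\kappa_{2}$ and $\kappa_{1}$ formulas, and the case split on $\kappa_{3}$: the general statement leaves $\kappa_{3}$ free, while the degenerate case $\kappa_{3}=0$ forces the rigid values \eqref{eq3}--\eqref{eq4}. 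Everything else is routine frame-pairing.
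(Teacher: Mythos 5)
Your overall strategy is the same as the paper's: pair \eqref{cparalleltangent} against the Frenet frame to obtain $\kappa_1'=0$ and $\lambda s\cos\theta=-\kappa_1^2$ (hence $\gamma$ is non-Legendre and $\lambda$ is constant), decompose $\sum_{\alpha}\xi_{\alpha}$ in $sp\{T,E_3\}$, use $g(\sum_{\alpha}\xi_{\alpha},\sum_{\alpha}\xi_{\alpha})=s$ to extract the $\kappa_2$-formula, and differentiate via $\nabla_T\xi_{\alpha}=-\varphi T$ to identify $\varphi T$ and settle the case $\kappa_3=0$. All of these computations are correct and agree with the paper's, down to the expression $\varphi T=\frac{-\kappa_1}{s\cos\theta}E_2-\frac{\kappa_3\sqrt{1-s\cos^2\theta}}{\sqrt{s}}E_4$.

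There is, however, a genuine gap at the sentence ``with $\kappa_2\neq0$ this makes $\gamma$ a slant helix of order $r\geq3$.'' Two things are missing there. First, $\kappa_2\neq0$ is asserted, not proved; the paper derives it by noting that $\kappa_2=0$ would force $\sum_{\alpha}\xi_{\alpha}$ to be parallel to $T$, which gives $\kappa_1=0$ or $\theta=\frac{\pi}{2}$, both contradictions. Second, and more substantively, constancy of $\kappa_1$ and $\kappa_2$ only shows the osculating order is at least $3$; the theorem claims $\gamma$ is a \emph{helix} of order $r$, i.e.\ that \emph{all} curvatures $\kappa_1,\dots,\kappa_{r-1}$ are constant. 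Your proposal never addresses $\kappa_3$ (in the general case) or any higher curvature. The paper closes this by computing $g(\varphi T,\varphi T)=1-s\cos^2\theta$ from the Frenet expansion of $\varphi T$, which forces $\kappa_3=$ constant, and then, when $\kappa_3\neq0$, by repeatedly differentiating that expansion and taking norms to obtain $\kappa_4,\kappa_5,\dots$ constant. You already have every ingredient needed for this iteration (the two expressions for $\nabla_T\sum_{\alpha}\xi_{\alpha}$ and the norm identity you use in the $\kappa_3=0$ case), but as written the proof establishes only that $\kappa_1$ and $\kappa_2$ are constant, not the helix conclusion.
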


\begin{proof}
Let us assume that $\gamma $ is $C$-parallel (in the tangent bundle). Then,
if we apply $E_{2}$ to equation (\ref{cparalleltangent}), we find $\kappa
_{1}^{\prime }=0$, that is, $\kappa _{1}=$constant. Now, applying $T$ to (%
\ref{cparalleltangent}), we have%
\begin{equation*}
\lambda s\cos \theta =-\kappa _{1}^{2}.
\end{equation*}%
Here, $\theta \neq \frac{\pi }{2}$ since $\kappa _{1}\neq 0$. Hence, $\gamma
$ is non-Legendre slant. So, we get%
\begin{equation*}
\lambda =\frac{-\kappa _{1}^{2}}{s\cos \theta }=\text{constant.}
\end{equation*}%
Equation (\ref{cparalleltangent}) can be rewritten as%
\begin{equation*}
\overset{s}{\underset{\alpha =1}{\sum }}\xi _{\alpha }=\frac{-\kappa _{1}^{2}%
}{\lambda }T+\frac{\kappa _{1}\kappa _{2}}{\lambda }E_{3},
\end{equation*}%
which is equivalent to%
\begin{equation}
\overset{s}{\underset{\alpha =1}{\sum }}\xi _{\alpha }=s\cos \theta T-\frac{%
\kappa _{2}s\cos \theta }{\kappa _{1}}E_{3}.  \label{eq1}
\end{equation}%
If we calculate the norm of both sides, we obtain%
\begin{equation}
\kappa _{2}=\frac{-\kappa _{1}\sqrt{1-s\cos ^{2}\theta }}{\sqrt{s}\cos
\theta }.  \label{eq2}
\end{equation}%
If we assume $\kappa _{2}=0$, we have $\overset{s}{\underset{\alpha =1}{\sum
}}\xi _{\alpha }$ is parallel to $T$. Then $\kappa _{1}=0$ or $\theta =\frac{%
\pi }{2},$ both of which is a contradiction. So, we have $\kappa _{2}\neq 0$
and $r\geq 3$. If we write equation (\ref{eq2}) in (\ref{eq1}), we get%
\begin{equation*}
\overset{s}{\underset{\alpha =1}{\sum }}\xi _{\alpha }=s\cos \theta T+\sqrt{s%
}\sqrt{1-s\cos ^{2}\theta }E_{3}.
\end{equation*}%
If we differentiate this last equation along the curve $\gamma ,$ we find%
\begin{equation}
\varphi T=\frac{-\kappa _{1}}{s\cos \theta }E_{2}-\frac{\kappa _{3}\sqrt{%
1-s\cos ^{2}\theta }}{\sqrt{s}}E_{4}.  \label{eq5}
\end{equation}%
If we calculate $g(\varphi T,\varphi T)$, we have
\begin{equation*}
s\cos \theta \left( 1-s\cos ^{2}\theta \right) \left( s\cos \theta -\kappa
_{3}^{2}\right) =\kappa _{1}^{2},
\end{equation*}%
which gives us $\kappa _{3}=$constant. In particular, if $\kappa _{3}=0$,
then we find equations (\ref{eq3}) and (\ref{eq4}). If $\kappa _{3}\neq 0,$
we differentiate equation (\ref{eq5}) along the curve $\gamma $ and find
that $\kappa _{4}=$constant. If we continue differentiating and calculating
the norm of both sides, we easily obtain $\kappa _{i}=$constant for all $i=%
\overline{1,r}$, that is, $\gamma $ is a slant helix of order $r$. Thus, we
have just proved the necessity.

To prove sufficiency, if $\gamma $ satisfies the equations given in the
Theorem, then it is easy to show that equation (\ref{cparalleltangent}) is
satisfied. So, $\gamma $ is $C$-parallel (in the tangent bundle).
\end{proof}

For $C$-parallel slant curves in the normal bundle, we have the following
Theorem:

\begin{theorem}
Let $\gamma :I\rightarrow M^{2m+s}$ be a unit-speed slant curve. Then $%
\gamma $ is $C$-parallel in the normal bundle if and only if it is a
Legendre helix of order $r\geq 3$ satisfying%
\begin{equation*}
\overset{s}{\underset{\alpha =1}{\sum }}\xi _{\alpha }=\sqrt{s}E_{3},
\end{equation*}%
\begin{equation*}
\varphi T=\frac{\kappa _{2}}{\sqrt{s}}E_{2}-\frac{\kappa _{3}}{\sqrt{s}}%
E_{4},
\end{equation*}%
\begin{equation*}
\kappa _{2}\neq 0,\text{ }\lambda =\frac{\kappa _{1}\kappa _{2}}{\sqrt{s}}
\end{equation*}%
and moreover if $\kappa _{3}=0$, then%
\begin{equation*}
\kappa _{2}=\sqrt{s},\text{ }\varphi T=E_{2}.\text{ }
\end{equation*}
\end{theorem}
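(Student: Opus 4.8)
The plan is to run the argument in close parallel with the preceding theorem, but starting from the normal-bundle characterization (\ref{cparallelnormal}), namely $\kappa _{1}^{\prime }E_{2}+\kappa _{1}\kappa _{2}E_{3}=\lambda \sum_{\alpha =1}^{s}\xi _{\alpha }$. First I would take the inner product of both sides with $T$. Since $E_{2},E_{3}\perp T$ the left-hand side vanishes, while the right-hand side equals $\lambda \sum_{\alpha }\eta ^{\alpha }(T)=\lambda s\cos \theta $ by the slant condition; as $\lambda \neq 0$ this forces $\cos \theta =0$, so $\gamma $ must be Legendre. Next I would pair (\ref{cparallelnormal}) with $E_{2}$: using $\eta ^{\alpha }(E_{2})=0$ (established in the preliminaries for slant curves) the right-hand side vanishes and we obtain $\kappa _{1}^{\prime }=0$, i.e. $\kappa _{1}$ is constant.

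With $\kappa _{1}^{\prime }=0$, equation (\ref{cparallelnormal}) collapses to $\kappa _{1}\kappa _{2}E_{3}=\lambda \sum_{\alpha }\xi _{\alpha }$, hence $\sum_{\alpha }\xi _{\alpha }=(\kappa _{1}\kappa _{2}/\lambda )E_{3}$. Since the $\xi _{\alpha }$ are orthonormal we have $\left\Vert \sum_{\alpha }\xi _{\alpha }\right\Vert =\sqrt{s}$, so taking norms and orienting $E_{3}$ along $\sum_{\alpha }\xi _{\alpha }$ pins down $\sum_{\alpha }\xi _{\alpha }=\sqrt{s}E_{3}$ together with the normalization $\lambda =\kappa _{1}\kappa _{2}/\sqrt{s}$. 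If $\kappa _{2}$ were $0$ the right-hand side would be the zero vector, contradicting $\left\Vert \sum_{\alpha }\xi _{\alpha }\right\Vert =\sqrt{s}\neq 0$; hence $\kappa _{2}\neq 0$ and the order is $r\geq 3$.

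To produce the $\varphi T$ identity I would differentiate $\sum_{\alpha }\xi _{\alpha }=\sqrt{s}E_{3}$ along $\gamma $. The left-hand side gives $\sum_{\alpha }\nabla _{T}\xi _{\alpha }=-s\varphi T$ by (\ref{nablaxi}), while the Frenet equation $\nabla _{T}E_{3}=-\kappa _{2}E_{2}+\kappa _{3}E_{4}$ turns the right-hand side into $\sqrt{s}(-\kappa _{2}E_{2}+\kappa _{3}E_{4})$; solving yields $\varphi T=(\kappa _{2}/\sqrt{s})E_{2}-(\kappa _{3}/\sqrt{s})E_{4}$, as claimed. Computing $g(\varphi T,\varphi T)$ in two ways — from this expression it equals $(\kappa _{2}^{2}+\kappa _{3}^{2})/s$, while the metric compatibility relation $g(\varphi T,\varphi T)=g(T,T)-\sum_{\alpha }\eta ^{\alpha }(T)^{2}$ together with the Legendre condition gives $g(\varphi T,\varphi T)=1$ — produces the constraint $\kappa _{2}^{2}+\kappa _{3}^{2}=s$. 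Setting $\kappa _{3}=0$ here forces $\kappa _{2}=\sqrt{s}$ and hence $\varphi T=E_{2}$, which is exactly the ``moreover'' clause.

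The remaining — and I expect most delicate — task is to verify the full helix property, i.e. that every $\kappa _{i}$ is constant. When $\kappa _{3}=0$ the curve already has order $3$ and nothing further is needed. When $\kappa _{3}\neq 0$, I would differentiate the $\varphi T$ identity once more, expanding $\nabla _{T}(\varphi T)=(\nabla _{T}\varphi )T+\varphi (\nabla _{T}T)$ via (\ref{nablaf}) on the left (where $(\nabla _{T}\varphi )T=\sum_{\alpha }\xi _{\alpha }=\sqrt{s}E_{3}$ by the Legendre condition) and the Frenet equations on the right, to obtain an explicit expression for $\varphi E_{2}$. Exploiting the skew-symmetry of $\varphi $, so that the $E_{2}$-component of $\varphi E_{2}$ must vanish, forces $\kappa _{2}^{\prime }=0$; then $\kappa _{2}^{2}+\kappa _{3}^{2}=s$ makes $\kappa _{3}$ constant as well, and comparing $g(\varphi E_{2},\varphi E_{2})=1$ with the derived expression yields $\kappa _{4}$ constant. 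Iterating this differentiate-and-normalize scheme gives $\kappa _{i}=$ constant for all $i$, so $\gamma $ is a Legendre helix of order $r$. For the converse, substituting the listed data back into (\ref{cparallelnormal}) and using $\kappa _{1}^{\prime }=0$ shows the equation holds with $\lambda =\kappa _{1}\kappa _{2}/\sqrt{s}$, completing the proof.
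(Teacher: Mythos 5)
Your proposal is correct and follows essentially the same route as the paper's own proof: apply $T$ to get the Legendre condition, apply $E_2$ to get $\kappa_1$ constant, normalize to obtain $\sum_{\alpha}\xi_{\alpha}=\sqrt{s}\,E_3$, differentiate to get the $\varphi T$ identity, then differentiate again and use the skew-symmetry of $\varphi$ plus norm computations to force all curvatures constant. The only cosmetic difference is that you derive $\kappa_2^2+\kappa_3^2=s$ before establishing $\kappa_2'=0$, whereas the paper does it afterward; the substance is identical.
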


\begin{proof}
Let us assume that $\gamma $ is $C$-parallel in the normal bundle. Then, if
we apply $T$ to equation (\ref{cparallelnormal}), we have $\eta ^{\alpha
}(T)=0$, so $\gamma $ is Legendre. Next, we apply $E_{2}$ and find $\kappa
_{1}=$constant. Thus, equation (\ref{cparallelnormal}) becomes%
\begin{equation*}
\kappa _{1}\kappa _{2}E_{3}=\lambda \overset{s}{\underset{\alpha =1}{\sum }}%
\xi _{\alpha },
\end{equation*}%
which gives us%
\begin{equation}
E_{3}=\frac{1}{\sqrt{s}}\overset{s}{\underset{\alpha =1}{\sum }}\xi _{\alpha
},\text{ }  \label{eq6}
\end{equation}%
\begin{equation*}
\kappa _{2}\neq 0,\text{ }\lambda =\frac{\kappa _{1}\kappa _{2}}{\sqrt{s}}.
\end{equation*}%
If we differentiate equation (\ref{eq6}), we get%
\begin{equation}
\varphi T=\frac{\kappa _{2}}{\sqrt{s}}E_{2}-\frac{\kappa _{3}}{\sqrt{s}}%
E_{4}.  \label{eq7}
\end{equation}%
If we differentiate this last equation, we obtain%
\begin{eqnarray}
\nabla _{T}\varphi T &=&\overset{s}{\underset{\alpha =1}{\sum }}\xi _{\alpha
}+\kappa _{1}\varphi E_{2}  \label{eq9} \\
&=&\frac{\kappa _{2}^{\prime }}{\sqrt{s}}E_{2}+\frac{\kappa _{2}}{\sqrt{s}}%
(-\kappa _{1}T+\kappa _{2}E_{3})-\frac{\kappa _{3}^{\prime }}{\sqrt{s}}E_{4}-%
\frac{\kappa _{3}}{\sqrt{s}}(-\kappa _{3}E_{3}+\kappa _{4}E_{5}).  \notag
\end{eqnarray}%
If we apply $E_{2}$ to both sides, we find $\kappa _{2}=$constant. Then, the
norm of equation (\ref{eq7}) gives us $\kappa _{3}=$constant. In particular,
if $\kappa _{3}=0$, from equation (\ref{eq7}), we have%
\begin{equation*}
\kappa _{2}=\sqrt{s},\text{ }\varphi T=E_{2}.
\end{equation*}%
Otherwise, from the norm of both sides in (\ref{eq9}), we also have $\kappa
_{4}=$constant. If we continue differentiating equation (\ref{eq9}), we find
that $\gamma $ is a helix of order $r$.

Conversely, let $\gamma $ be a Legendre helix of order $r\geq 3$ satisfying
the stated equations. Then, it is easy to show that equation (\ref%
{cparallelnormal}) is verified. Thus, $\gamma $ is $C$-parallel in the
normal bundle.
\end{proof}

\section{$C$-proper Slant Curves of $S$-manifolds\label{sect-cproper}}

\bigskip For $C$-proper slant curves in the tangent bundle, we can state the
following Theorem:

\begin{theorem}
\label{cproptan}Let $\gamma :I\rightarrow M^{2m+s}$ be a unit-speed slant
curve. Then $\gamma $ is $C$-proper (in the tangent bundle) if and only if
it is a non-Legendre slant curve satisfying%
\begin{equation*}
\overset{s}{\underset{\alpha =1}{\sum }}\xi _{\alpha }\in sp\left\{
T,E_{3},E_{4}\right\} ,
\end{equation*}%
\begin{equation*}
\varphi T\in sp\left\{ E_{2},E_{3},E_{4},E_{5}\right\} ,
\end{equation*}%
\begin{equation*}
\kappa _{1}\neq constant,\text{ }\kappa _{2}\neq 0,
\end{equation*}%
\begin{equation}
\text{ }\lambda =\frac{3\kappa _{1}\kappa _{1}^{\prime }}{s\cos \theta },
\label{eq10}
\end{equation}%
\begin{equation}
\kappa _{1}^{2}+\kappa _{2}^{2}=\frac{\kappa _{1}^{\prime \prime }}{\kappa
_{1}},  \label{eq11}
\end{equation}%
\begin{equation}
\lambda s\eta ^{\alpha }(E_{3})=-(2\kappa _{1}^{\prime }\kappa _{2}+\kappa
_{1}\kappa _{2}^{\prime }),  \label{eq12}
\end{equation}%
\begin{equation}
\lambda s\eta ^{\alpha }(E_{4})=-\kappa _{1}\kappa _{2}\kappa _{3},
\label{eq13}
\end{equation}%
\begin{equation}
\eta ^{\alpha }(E_{3})^{2}+\eta ^{\alpha }(E_{4})^{2}=\frac{1-s\cos
^{2}\theta }{s}  \label{eq14}
\end{equation}%
and moreover if $\kappa _{3}=0$, then%
\begin{equation}
\varphi T=\sqrt{1-s\cos ^{2}\theta }E_{2},  \label{eq16}
\end{equation}%
\begin{equation}
E_{3}=\frac{1}{\sqrt{s}\sqrt{1-s\cos ^{2}\theta }}\left( -s\cos \theta T+%
\overset{s}{\underset{\alpha =1}{\sum }}\xi _{\alpha }\right) ,  \label{eq17}
\end{equation}%
\begin{equation}
\kappa _{2}=\sqrt{s}\left( 1+\frac{\kappa _{1}\cos \theta }{\sqrt{1-s\cos
^{2}\theta }}\right) .  \label{eq18}
\end{equation}
\end{theorem}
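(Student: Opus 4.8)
The plan is to read everything off Proposition \ref{lemma1} iii), so that the entire argument amounts to extracting information from the single vector identity (\ref{cpropertangent}) by pairing it against the Frenet frame. Throughout I would use the slant relations $\eta^{\alpha}(T)=\cos\theta$ and $\eta^{\alpha}(E_2)=0$, together with the structural facts $g(\varphi T,T)=0$, $g(\varphi T,\varphi T)=1-s\cos^{2}\theta$ (from the framed metric condition), and $\nabla_T\xi_\alpha=-\varphi T$ from (\ref{nablaxi}), whose crucial feature is that it is independent of $\alpha$.

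For necessity, I would first take the $E_2$-component of (\ref{cpropertangent}); since $\eta^{\alpha}(E_2)=0$ the right-hand side vanishes and I obtain $\kappa_1^3+\kappa_1\kappa_2^2-\kappa_1''=0$, i.e. (\ref{eq11}). Taking the $T$-component gives $3\kappa_1\kappa_1'=\lambda s\cos\theta$, i.e. (\ref{eq10}). From these two relations I would exclude the degenerate cases: if $\theta=\pi/2$ then (\ref{eq10}) forces $\kappa_1'=0$, whereupon (\ref{eq11}) gives $\kappa_1^2+\kappa_2^2=0$, a contradiction, so $\gamma$ is non-Legendre; if $\kappa_1$ were constant then $\lambda=0$ by (\ref{eq10}), against $\lambda\neq 0$; and if $\kappa_2=0$ then, using (\ref{eq11}) to kill the $E_2$-coefficient, (\ref{cpropertangent}) forces $\sum_\alpha\xi_\alpha$ parallel to $T$, which by $|\sum_\alpha\xi_\alpha|^2=s$ and $\eta^\alpha(T)=\cos\theta$ only occurs in the geodesic (equality) case, contradicting $\kappa_1\neq 0$.

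Because (\ref{eq11}) annihilates the $E_2$-coefficient, (\ref{cpropertangent}) collapses to a combination of $T,E_3,E_4$, giving $\sum_\alpha\xi_\alpha\in sp\{T,E_3,E_4\}$; pairing with $E_3$ and $E_4$ then yields (\ref{eq12}) and (\ref{eq13}). Here I would first record the auxiliary fact that $\eta^\alpha(E_3)$ and $\eta^\alpha(E_4)$ are independent of $\alpha$: differentiating $\eta^\alpha(E_2)=0$ along $\gamma$ and using $\nabla_T\xi_\alpha=-\varphi T$ gives $\kappa_2\,\eta^\alpha(E_3)=g(\varphi T,E_2)+\kappa_1\cos\theta$, whose right side is free of $\alpha$, and one further differentiation disposes of $\eta^\alpha(E_4)$. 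Applying $|\sum_\alpha\xi_\alpha|^2=s$ to the orthonormal decomposition $\sum_\alpha\xi_\alpha=s\cos\theta\,T+s\,\eta^\alpha(E_3)E_3+s\,\eta^\alpha(E_4)E_4$ produces (\ref{eq14}), and differentiating this decomposition via $\nabla_T\sum_\alpha\xi_\alpha=-s\varphi T$ and the Frenet equations places $\varphi T$ in $sp\{E_2,E_3,E_4,E_5\}$.

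For the sub-case $\kappa_3=0$, equation (\ref{eq13}) gives $\eta^\alpha(E_4)=0$, so (\ref{eq14}) fixes $\eta^\alpha(E_3)=\sqrt{(1-s\cos^{2}\theta)/s}$, a constant since $\theta$ is, and solving the decomposition for $E_3$ gives (\ref{eq17}). Differentiating (\ref{eq17}), the left side is $-\kappa_2E_2$ and the right side reduces to $\varphi T$; matching the $E_2$-coefficient against $|\varphi T|=\sqrt{1-s\cos^{2}\theta}$ then yields (\ref{eq16}) and, equivalently, (\ref{eq18}). Sufficiency is the routine converse: substituting the listed data back into (\ref{cpropertangent}) verifies it directly. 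I expect the main obstacle to be the careful bookkeeping of the $\alpha$-independence of the $\eta^\alpha(E_i)$ and the consistent sign and orientation choices in the $\kappa_3=0$ case; once the degenerate cases are excluded, the projection and norm computations themselves are routine.
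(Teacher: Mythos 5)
Your proposal is correct and follows essentially the same route as the paper: project (\ref{cpropertangent}) onto $T,E_2,E_3,E_4$ to get (\ref{eq10})--(\ref{eq13}), rule out the Legendre, constant-$\kappa_1$, and $\kappa_2=0$ cases by the same contradictions, take norms for (\ref{eq14}), and differentiate to locate $\varphi T$ and to settle the $\kappa_3=0$ sub-case. The only real difference is that you explicitly justify the $\alpha$-independence of $\eta^{\alpha}(E_3)$ and $\eta^{\alpha}(E_4)$ by differentiating $\eta^{\alpha}(E_2)=0$, a point the paper leaves implicit, which is a small improvement rather than a divergence.
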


\begin{proof}
Let $\gamma $ be $C$-proper (in the tangent bundle). If we apply $T$ to
equation (\ref{cpropertangent}), we find%
\begin{equation*}
\lambda s\cos \theta =3\kappa _{1}\kappa _{1}^{\prime }.
\end{equation*}%
Let us assume that $\gamma $ is Legendre. Then we have $\kappa _{1}^{\prime
}=0$, that is, $\kappa _{1}=$constant. if we apply $E_{2}$ to equation (\ref%
{cpropertangent}), we get%
\begin{equation*}
0=\kappa _{1}^{3}+\kappa _{1}\kappa _{2}^{2}-\kappa _{1}^{\prime \prime
}=\kappa _{1}\left( \kappa _{1}^{2}+\kappa _{2}^{2}\right)
\end{equation*}%
which gives us $\kappa _{1}=0$. Then equation (\ref{cpropertangent}) becomes%
\begin{equation*}
\lambda \overset{s}{\underset{\alpha =1}{\sum }}\xi _{\alpha }=0,
\end{equation*}%
which is a contradiction. Thus, $\gamma $ is non-Legendre slant and $\kappa
_{1}\neq $constant. We find equations (\ref{eq10}), (\ref{eq11}), (\ref{eq12}%
) and (\ref{eq13}) applying $T$, $E_{2}$, $E_{3}$ and $E_{4}$, respectively.
Then, we write these equations in (\ref{cpropertangent}) and calculate the
norm of boths sides to obtain equation (\ref{eq14}). Now, let us assume $%
\kappa _{2}=0$. Then, from equation (\ref{cpropertangent}), we have%
\begin{equation*}
\lambda \overset{s}{\underset{\alpha =1}{\sum }}\xi _{\alpha }=3\kappa
_{1}\kappa _{1}^{\prime }T,
\end{equation*}%
which is only possible when%
\begin{equation*}
T=\frac{1}{\sqrt{s}}\overset{s}{\underset{\alpha =1}{\sum }}\xi _{\alpha }.
\end{equation*}%
If we calculate $\nabla _{T}T$, we find $\kappa _{1}=0$, which is a
contradiction. Hence, $\kappa _{2}\neq 0$. Differentiating equation (\ref%
{cpropertangent}), we can easily see that%
\begin{equation*}
\varphi T\in sp\left\{ E_{2},E_{3},E_{4},E_{5}\right\} .
\end{equation*}%
In particular, if $\kappa _{3}=0$, we obtain equations (\ref{eq16}), (\ref%
{eq17}) and (\ref{eq18}). Please see our paper \cite{GO2018}, Case III,
equation (4.9), which is also valid when $\kappa _{1}$ and $\kappa _{2}$ are
not constants.

Conversely, if $\gamma $ is a non-Legendre slant curve satisfying the stated
equations, then Proposition \ref{lemma1} iii) is valid. So, $\gamma $ is
C-proper (in the tangent bundle).
\end{proof}

Finally, we give the following Theorem for $C$-proper slant curves in the
normal bundle:

\begin{theorem}
Let $\gamma :I\rightarrow M^{2m+s}$ be a unit-speed slant curve. Then $%
\gamma $ is $C$-proper in the normal bundle if and only if it is a Legendre
curve satisfying%
\begin{equation*}
\overset{s}{\underset{\alpha =1}{\sum }}\xi _{\alpha }\in sp\left\{
E_{3},E_{4}\right\} ,
\end{equation*}%
\begin{equation*}
\varphi T\in sp\left\{ E_{2},E_{3},E_{4},E_{5}\right\}
\end{equation*}%
\begin{equation*}
\kappa _{1}\neq constant,\text{ }\kappa _{2}\neq 0,
\end{equation*}%
\begin{equation*}
\kappa _{1}\kappa _{2}^{2}-\kappa _{1}^{\prime \prime }=0,
\end{equation*}%
\begin{equation*}
\lambda s\eta ^{\alpha }(E_{3})=-(2\kappa _{1}^{\prime }\kappa _{2}+\kappa
_{1}\kappa _{2}^{\prime }),
\end{equation*}%
\begin{equation*}
\lambda s\eta ^{\alpha }(E_{4})=-\kappa _{1}\kappa _{2}\kappa _{3},
\end{equation*}%
\begin{equation*}
\eta ^{\alpha }(E_{3})^{2}+\eta ^{\alpha }(E_{4})^{2}=\frac{1}{s}
\end{equation*}%
and moreover if $\kappa _{3}=0$, then%
\begin{equation*}
\overset{s}{\underset{\alpha =1}{\sum }}\xi _{\alpha }=\sqrt{s}E_{3},
\end{equation*}%
\begin{equation*}
\kappa _{2}=\sqrt{s},\text{ }\varphi T=E_{2}.\text{ }
\end{equation*}
\end{theorem}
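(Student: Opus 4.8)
The plan is to mimic the proof of Theorem \ref{cproptan}, pairing the normal-bundle identity (\ref{cpropernormal}) successively with the Frenet vectors $T,E_2,E_3,E_4$; the essential structural difference is that (\ref{cpropernormal}) carries no $E_1$-term, and this is exactly what will force $\gamma$ to be Legendre rather than non-Legendre. Throughout I would use that the slant hypothesis gives $\eta^\alpha(T)=\cos\theta$ and, after one differentiation, $\eta^\alpha(E_2)=0$ for every $\alpha$; differentiating $\eta^\alpha(E_2)=0$ once more yields $\kappa_2\eta^\alpha(E_3)=\kappa_1\cos\theta+g(E_2,\varphi T)$, whose right-hand side is independent of $\alpha$, so each $\eta^\alpha(E_i)$ is in fact $\alpha$-independent. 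This is what makes the single-index quantities $\eta^\alpha(E_3),\eta^\alpha(E_4)$ in the statement meaningful and justifies the replacement $\sum_{\alpha=1}^{s}\eta^\alpha(E_i)=s\,\eta^\alpha(E_i)$. I would also use $g(\xi_\alpha,\xi_\beta)=\delta_{\alpha\beta}$, so that $\bigl|\sum_{\alpha=1}^{s}\xi_\alpha\bigr|^2=s$.

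Pairing (\ref{cpropernormal}) with $T$ makes the left-hand side vanish and leaves $\lambda s\cos\theta=0$; since $\lambda\neq 0$ this gives $\cos\theta=0$, i.e.\ $\gamma$ is Legendre. Pairing with $E_2$ and using $\eta^\alpha(E_2)=0$ gives $\kappa_1\kappa_2^2-\kappa_1''=0$, after which the $E_2$-coefficient of (\ref{cpropernormal}) drops out and $\sum_\alpha\xi_\alpha\in\mathrm{sp}\{E_3,E_4\}$. Pairing with $E_3$ and $E_4$ then produces $\lambda s\eta^\alpha(E_3)=-(2\kappa_1'\kappa_2+\kappa_1\kappa_2')$ and $\lambda s\eta^\alpha(E_4)=-\kappa_1\kappa_2\kappa_3$, and taking the norm of $\sum_\alpha\xi_\alpha=s\eta^\alpha(E_3)E_3+s\eta^\alpha(E_4)E_4$ gives $\eta^\alpha(E_3)^2+\eta^\alpha(E_4)^2=1/s$. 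For nondegeneracy: if $\kappa_2\equiv 0$ then, using $\kappa_1''=\kappa_1\kappa_2^2$, every coefficient on the left of (\ref{cpropernormal}) vanishes and $\lambda\sum_\alpha\xi_\alpha=0$, a contradiction; and once $\kappa_2\neq 0$ is known, $\kappa_1$ constant would force $\kappa_1\kappa_2^2=\kappa_1''=0$, hence $\kappa_2=0$, again impossible. Thus $\kappa_2\neq 0$ and $\kappa_1\neq\text{const}$.

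For the $\varphi T$-condition I would differentiate $\sum_\alpha\xi_\alpha=aE_3+bE_4$, where $a,b$ are the coefficients just found; since $\nabla_T\xi_\alpha=-\varphi T$, the left side becomes $-s\varphi T$, while the Frenet formulas expand the right side over $E_2,E_3,E_4,E_5$, giving $\varphi T\in\mathrm{sp}\{E_2,E_3,E_4,E_5\}$. When $\kappa_3=0$ one has $b=0$, so $\sum_\alpha\xi_\alpha=\pm\sqrt{s}E_3$, and (fixing the sign of $\lambda$) $\sum_\alpha\xi_\alpha=\sqrt{s}E_3$; differentiating this with $\kappa_3=0$ yields $-s\varphi T=-\sqrt{s}\,\kappa_2E_2$, i.e.\ $\varphi T=(\kappa_2/\sqrt{s})E_2$. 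Because $\gamma$ is Legendre, $g(\varphi T,\varphi T)=1-s\cos^2\theta=1$, which forces $\kappa_2=\sqrt{s}$ and $\varphi T=E_2$.

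Sufficiency is a direct substitution: for a Legendre curve satisfying the listed relations, $\kappa_1\kappa_2^2-\kappa_1''=0$ removes the $E_2$-term of (\ref{cpropernormal}), and the two $\eta^\alpha$-relations turn $-(2\kappa_1'\kappa_2+\kappa_1\kappa_2')E_3-\kappa_1\kappa_2\kappa_3E_4$ into $\lambda\sum_\alpha\xi_\alpha$, so (\ref{cpropernormal}) holds and Proposition \ref{lemma1} iv) applies. The only genuinely delicate points I anticipate are the bookkeeping that establishes the $\alpha$-independence of the $\eta^\alpha(E_i)$ (needed to read off the single-index relations in the statement) and the fixing of the sign in $\sum_\alpha\xi_\alpha=\sqrt{s}E_3$; everything else reduces to the routine Frenet computations already carried out for Theorem \ref{cproptan}.
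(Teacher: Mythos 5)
Your proposal is correct and follows essentially the same route as the paper, whose proof of this theorem simply says "similar to the proof of Theorem \ref{cproptan}" (pairing equation (\ref{cpropernormal}) with $T,E_{2},E_{3},E_{4}$ and then differentiating) and refers to \cite{OG2014} for the case $\kappa _{3}=0$. Your additional justification of the $\alpha $-independence of $\eta ^{\alpha }(E_{3})$ and $\eta ^{\alpha }(E_{4})$ is a detail the paper leaves implicit, but it does not change the method.
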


\begin{proof}
The proof is similar to the proof of Theorem \ref{cproptan}. For the case $%
\kappa _{3}=0$, please refer to \cite{OG2014}.
\end{proof}

\section{Examples}

In this section, we give the following two examples in the well-known $S$%
-manifold $%
%TCIMACRO{\U{211d} }%
%BeginExpansion
\mathbb{R}
%EndExpansion
^{2m+s}(-3s)$ \cite{Hasegawa}:

\begin{example}
Let us consider $%
%TCIMACRO{\U{211d} }%
%BeginExpansion
\mathbb{R}
%EndExpansion
^{2m+s}(-3s)$ with $m=2$ and $s=2$. The curve $\gamma :I\rightarrow
%TCIMACRO{\U{211d} }%
%BeginExpansion
\mathbb{R}
%EndExpansion
^{6}(-6)$ given by
\begin{equation*}
\gamma (t)=\left( \sin t,2+\sin t,-\cos t,3-\cos t,-2t-\sin t\cos
t,1-2t-\sin t\cos t\right)
\end{equation*}%
is a unit-speed non-Legendre slant helix with%
\begin{equation*}
\kappa _{1}=\kappa _{2}=\frac{1}{\sqrt{2}},\text{ }\theta =\frac{2\pi }{3}.
\end{equation*}%
It has the Frenet frame field%
\begin{equation*}
\left\{ T,\sqrt{2}\varphi T,\left( T+\overset{2}{\underset{\alpha =1}{\sum }}%
\xi _{\alpha }\right) \right\}
\end{equation*}
and it is C-parallel (in the tangent bundle) with $\lambda =\frac{1}{2}$.
\end{example}

\begin{example}
Let us consider $%
%TCIMACRO{\U{211d} }%
%BeginExpansion
\mathbb{R}
%EndExpansion
^{2m+s}(-3s)$ with $m=1$ and $s=4$. We define real valued functions on an
open interval $I$ as
\begin{equation*}
\gamma _{1}(t)=2\int\limits_{0}^{t}\cos (e^{2u})du,\text{ }\gamma
_{2}(t)=-2\int\limits_{0}^{t}\sin (e^{2u})du,
\end{equation*}%
\begin{equation*}
\gamma _{3}(t)=...=\gamma _{6}(t)=-4\int\limits_{0}^{t}\cos (e^{2u})\left(
\int\limits_{0}^{u}\sin (e^{2v})dv\right) du.
\end{equation*}%
The curve $\gamma :I\rightarrow
%TCIMACRO{\U{211d} }%
%BeginExpansion
\mathbb{R}
%EndExpansion
^{6}(-12)$, $\gamma (t)=\left( \gamma _{1}(t),...,\gamma _{6}(t)\right) $ is
a unit-speed Legendre curve with%
\begin{equation*}
\kappa _{1}=2e^{2t},\text{ }\kappa _{2}=2,\text{ }r=3,
\end{equation*}%
\begin{equation*}
\varphi T=E_{2},\text{ }E_{3}=\frac{1}{2}\overset{4}{\underset{\alpha =1}{%
\sum }}\xi _{\alpha }
\end{equation*}%
and it is $C$-proper in the normal bundle with $\lambda =-8e^{2t}$.
\end{example}

\textbf{Acknowledgements.} This work is financially supported by Balikesir University
Research Project Grant no. BAP 2018/016. The authors would like to thank the
Balikesir University.

\end{document}